\newtheorem{thm}{Theorem}[section]
\newtheorem{prop}[thm]{Proposition}
\theoremstyle{definition}
\newtheorem{defn}[thm]{Definition}
\newtheorem{rem}[thm]{Remark}
\newtheorem{ex}[thm]{Example}
\DeclareMathOperator{\SYT}{SYT}
\DeclareMathOperator{\SSYT}{SSYT}
\newcommand{\ds}{\displaystyle}
\newcommand{\bb}[1]{\mathbb{#1}}
\newcommand{\mc}[1]{\mathcal{#1}}
\newcommand{\mb}[1]{\mathbf{#1}}
\newcommand{\tw}[1]{\widetilde{#1}}
\newcommand{\ov}[1]{\overline{#1}}
\newcommand{\la}{\lambda}
\newcommand{\vp}{\varphi}
\newcommand{\RS}{{\rm RS}}
\newcommand{\qrst}{q{\rm RS}t}
\newcommand{\col}{{\rm col}}
\newcommand{\row}{{\rm row}}
\newcommand{\id}{{\rm id}}
\newcommand{\U}{\mathcal{U}}
\newcommand{\D}{\mathcal{D}}
\newcommand{\Yred}{\Yfillcolour{red}}
\newcommand{\Ycyan}{\Yfillcolour{cyan}}
\newcommand{\Ywhite}{\Yfillcolour{white}}
\title[\qrst: Probabilistic Robinson--Schensted for Macdonald polynomials]{\qrst: A probabilistic Robinson--Schensted correspondence for Macdonald polynomials}
\author[F. Aigner and G. Frieden]{Florian Aigner\thanks{\href{mailto:florian.aigner@univie.ac.at}{florian.aigner@univie.ac.at}. FA was supported by the Austrian Science Fund FWF: Erwin Schr\"odinger Fellowship J 4387.}\addressmark{1} \and Gabriel Frieden\thanks{\href{mailto:gabriel.frieden@lacim.ca}{gabriel.frieden@lacim.ca}. GF was supported by a CRM-ISM postdoctoral fellowship.}\addressmark{1}}
\address{\addressmark{1} LaCIM, Universit\'e du Qu\'ebec \`a Montr\'eal, Montr\'eal, QC, Canada}
\abstract{We present a probabilistic generalization of the Robinson--Schensted correspondence in which a permutation maps to several different pairs of standard Young tableaux with nonzero probability. The probabilities depend on two parameters $q$ and $t$, and the correspondence gives a new proof of the squarefree part of the Cauchy identity for Macdonald polynomials. By specializing $q$ and $t$ in various ways, one recovers both the row and column insertion versions of the Robinson--Schensted correspondence, as well as several $q$- and $t$-deformations of row and column insertion which have been introduced in recent years in connection with integrable probability.}
\keywords{RSK, growth diagrams, Macdonald polynomials, hook walks}
\begin{document}

\maketitle

\section{Introduction}
\label{sec: Intro}

The Robinson--Schensted (RS) correspondence is a bijection between permutations and pairs of standard Young tableaux of the same shape. This bijection, along with its generalization due to Knuth (RSK), has significant applications in combinatorics, representation theory, algebraic geometry, and probability. One of the most important features of RSK is that it gives a bijective proof of the Cauchy identity
\begin{equation}
\label{eq_intro_Cauchy}
\prod_{i,j \geq 1} \dfrac{1}{1-x_iy_j} = \sum_\la s_\la(\mathbf{x}) s_\la(\mathbf{y}),
\end{equation}
where the sum is over all partitions, and $s_\la(\mathbf{z})$ denotes a Schur function in the variables $\mathbf{z}=(z_1,z_2,\ldots)$. In particular, the RS case of RSK gives a bijective proof of the identity
\begin{equation}
\label{eq_intro_RS_Cauchy}
n! = \sum_{\la \vdash n} (f_\la)^2,
\end{equation}
where the sum is over all partitions of $n$, and $f_\la$ is the number of standard Young tableaux of shape $\la$; this identity arises from \eqref{eq_intro_Cauchy} by comparing the coefficients of the squarefree monomial $x_1 \cdots x_n y_1 \cdots y_n$ on either side.

In the past decade, several randomized versions of RS and RSK have been introduced \cite{BorodinPetrov16, BufetovMatveev18, BufetovPetrov15, MatveevPetrov17, OConnellPei13, Pei14}. In these versions, a permutation (or, for RSK, a nonnegative integer matrix) has nonzero probability of mapping to several different pairs of tableaux. The probabilities depend on a parameter $q$ or $t$ in $[0,1)$, and the algorithms give proofs of generalized Cauchy identities for $q$-Whittaker or Hall--Littlewood symmetric functions. These randomized insertion algorithms have applications to probabilistic models such as the TASEP, ASEP, and stochastic six-vertex model \cite{BorodinPetrov16, MatveevPetrov17, BufetovMatveev18}, and to the asymptotics of infinite matrices over a finite field \cite{BufetovPetrov15}.

In this abstract, we define a randomized generalization of RS which depends on two parameters $q$ and $t$. Our map is designed to give a new proof of the squarefree part of the Cauchy identity for the Macdonald symmetric functions $P_\la(\mb{x};q,t)$. The $P_\la(\mb{x};q,t)$ are ``master'' symmetric functions, in the sense that they specialize to many important families of symmetric functions (Schur, $q$-Whittaker, Hall--Littlewood, Jack). Similarly, our randomized algorithm, which we call $\qrst$, specializes to many of the known variants of RS, including the row and column insertion versions of ordinary RS, $q$-deformations of row and column insertion \cite{BorodinPetrov16, OConnellPei13, Pei14}, and a $t$-deformation of column insertion \cite{BufetovPetrov15}.

Another interesting specialization of $\qrst$ comes from setting $q = t$. This specialization reduces the Macdonald functions to the Schur functions, but it does not remove the randomness from our algorithm. Instead, it produces a one-parameter family of probabilistic insertion algorithms which interpolate between row insertion ($q=t \rightarrow 0$) and column insertion $(q=t \rightarrow \infty)$. At the intermediate value $q = t \rightarrow 1$, the probability that any $\sigma \in S_n$ inserts to a pair of standard Young tableaux of shape $\la$ is equal to the Plancherel measure of $\la$, and in fact each standard Young tableau of shape $\la$ appears as the insertion tableau with probability $f_\la/n!$. We also obtain a pair of identities involving hook-lengths and the numbers $f_\la$ (equations \eqref{eq_mu}, \eqref{eq_nu}), which we believe are new.

This extended abstract is organized as follows. In \S \ref{sec_RS}, we present the notion of an insertion algorithm from the perspective of up and down operators on Young's lattice and local growth rules. In \S \ref{sec_Mac}, we discuss Macdonald polynomials and introduce $(q,t)$-analogues of the up and down operators. In \S \ref{sec_qrst}, we present our probabilistic insertion algorithm, and in \S \ref{sec_properties}, we discuss some of its specializations. For further details, including proofs of our results, we refer the reader to our paper \cite{AF_qrst}.

\subsection*{Notation}
We assume the reader is familiar with (skew) Young diagrams, standard and semistandard Young tableaux (abbreviated $\SYT$ and $\SSYT$, respectively), and Schur functions, as defined, e.g., in \cite[Ch. 7]{EC2}. We draw Young diagrams in French notation. We write $\SYT(\la)$ (resp., $\SSYT(\la))$ for the set of standard (resp., semistandard) Young tableaux of shape $\la$. We call a SSYT with no repeated entries a \emph{partial standard Young tableau}. If $T$ is a SSYT, we denote by $T^{(i)}$ the shape of the subtableau consisting of entries at most $i$.


\section{Insertion algorithms via local growth rules}
\label{sec_RS}

\emph{Young's lattice} is the partial order $(\bb{Y}, \subseteq)$ on the set of partitions defined by inclusion of Young diagrams. For $\la,\mu \in \bb{Y}$, write $\mu \lessdot \la$ if $\mu \subseteq \la$ and $|\la/\mu| = 1$, and define
\[
\D(\lambda) = \{\mu \,|\, \mu \lessdot \lambda\}, \qquad \U(\lambda) = \{\nu \,|\, \nu \gtrdot \lambda\}.
\]
An \emph{inner corner} of $\la$ is a cell $c \in \la$ such that $\la/\mu = \{c\}$ for some $\mu \in \D(\la)$. An \emph{outer corner} of $\la$ is a cell $c \not \in \la$ such that $\nu/\la = \{c\}$ for some $\nu \in \U(\la)$. We will often identify the elements of $\D(\la)$ and $\U(\la)$ with the corresponding inner and outer corners of $\la$.

Let $\bb{Q} \bb{Y}$ be the $\bb{Q}$-vector space with basis $\bb{Y}$. The \emph{up operator} $U$ and \emph{down operator} $D$ are linear maps on $\bb{Q} \bb{Y}$ defined by $U \lambda = \sum_{\nu \in \U(\la)} \nu$ and $D \lambda = \sum_{\mu \in \D(\la)} \mu.$ These operators satisfy the commutation relation
\begin{equation}
\label{eq: up down commutator}
DU-UD = I,
\end{equation}
where $I$ is the identity map. This relation immediately implies the identity
\begin{equation}
\label{eq: n! = sum over SYT}
n! = \sum_{\la \vdash n} (f_\la)^2.
\end{equation}
Indeed, a standard Young tableau of shape $\la$ can be viewed as a saturated chain in Young's lattice from the empty partition to $\la$. This implies that the right-hand side of \eqref{eq: n! = sum over SYT} is equal to $\left\langle D^nU^n\emptyset,\emptyset \right\rangle$, where $\left\langle \cdot, \cdot \right\rangle$ is the inner product on $\bb{Q} \bb{Y}$ defined by $\left\langle \la, \mu \right\rangle= \delta_{\lambda,\mu}$ for $\la,\mu \in \bb{Y}$. On the other hand, a straightforward induction using the commutation relation \eqref{eq: up down commutator} shows that $\left\langle D^nU^n\emptyset,\emptyset \right\rangle$ is equal to $n!$.

The relation \eqref{eq: up down commutator} can be proved by reformulating it as the set of equations
\begin{align*}
|\U(\lambda)| &= |\D(\la)| + 1 & \text{ for all } \la, \\
|\U(\lambda) \cap \U(\rho)| &= |\D(\lambda) \cap \D(\rho)| & \text{ for } \la \neq \rho.
\end{align*}
It turns out to be quite fruitful to make the proofs of these equations explicitly bijective. For $\la \neq \rho$, this is uninteresting, since either $\D(\lambda) \cap \D(\rho) = \{\lambda \cap \rho\}$ and $\U(\lambda) \cap \U(\rho) = \{\lambda \cup \rho\}$, or both of these intersections are empty. For the equation $|\U(\la)| = |\D(\la)| + 1$, set
$
\D^*(\la) = \D(\la) \cup \{\la\},
$
and choose, for each $\la$, a bijection
\[
F_\lambda: \D^*(\lambda) \rightarrow \U(\lambda).
\]
Two choices for $F_\lambda$ are particularly natural: the \emph{row insertion bijection} $F_\la^{\textnormal{row}}$, and the \emph{column insertion bijection} $F_\la^{\textnormal{col}}$. The bijection $F_\la^\row$ sends $\la$ to the outer corner in the first row of $\la$, and the inner corner in row $i$ to the outer corner in row $i+1$; $F_\la^\col$ sends $\la$ to the outer corner in the first column of $\la$, and the inner corner in column $i$ to the outer corner in column $i+1$. Figure \ref{fig: map between down and up} illustrates these maps.

\begin{figure}
\begin{center}
\begin{tikzpicture}
\begin{scope}[scale=1]
\Ylinecolour{lightgray}
\tyng(0cm,0cm,7,5,5,2,1)
\Yred
\tgyoung(0cm,0cm,::::::;,,::::;,:;,;)
\Ywhite
\Ycyan
\tgyoung(0cm,0cm,:::::::;,:::::;,,::;,:;,;)
\Ywhite
\draw[line width=2pt] (0,0) -- (7*12pt,0) -- (7*12pt,12pt) -- (5*12pt,12pt) -- (5*12pt,3*12pt) -- (2*12pt,3*12pt) -- (2*12pt,4*12pt) -- (1*12pt,4*12pt) -- (1*12pt,5*12pt) -- (0*12pt,5*12pt) -- (0,0) --(7*12pt,0);
\draw[line width=1pt, ->] (.5*12pt,4.5*12pt) -- (.5*12pt,5.5*12pt);
\draw[line width=1pt, ->] (1.5*12pt,3.5*12pt) -- (1.5*12pt,4.5*12pt);
\draw[line width=1pt, ->] (4.5*12pt,2.5*12pt) -- (2.5*12pt,3.5*12pt);
\draw[line width=1pt, ->] (6.5*12pt,.5*12pt) -- (5.5*12pt,1.5*12pt);
\node at (3.5*12pt,-12pt) {$F_{\lambda}^{\textnormal{row}}$};

\begin{scope}[xshift=5cm]
\Ylinecolour{lightgray}
\tyng(0cm,0cm,7,5,5,2,1)
\Yred
\tgyoung(0cm,0cm,::::::;,,::::;,:;,;)
\Ywhite
\Ycyan
\tgyoung(0cm,0cm,:::::::;,:::::;,,::;,:;,;)
\Ywhite
\draw[line width=2pt] (0,0) -- (7*12pt,0) -- (7*12pt,12pt) -- (5*12pt,12pt) -- (5*12pt,3*12pt) -- (2*12pt,3*12pt) -- (2*12pt,4*12pt) -- (1*12pt,4*12pt) -- (1*12pt,5*12pt) -- (0*12pt,5*12pt) -- (0,0) --(7*12pt,0);
\draw[line width=1pt, ->] (.5*12pt,4.5*12pt) -- (1.5*12pt,4.5*12pt);
\draw[line width=1pt, ->] (1.5*12pt,3.5*12pt) -- (2.5*12pt,3.5*12pt);
\draw[line width=1pt, ->] (4.5*12pt,2.5*12pt) -- (5.5*12pt,1.5*12pt);
\draw[line width=1pt, ->] (6.5*12pt,.5*12pt) -- (7.5*12pt,.5*12pt);
\node at (3.5*12pt,-12pt) {$F_{\lambda}^{\textnormal{col}}$};
\end{scope}
\end{scope}
\end{tikzpicture}
\end{center}
\vspace{-3ex}
\caption{\label{fig: map between down and up} The Young diagram of the partition $\lambda=(7,5,5,2,1)$, with inner corners colored red and outer corners colored blue. The arrows depict the bijections $F_\lambda^{\textnormal{row}}$ and $F_\lambda^{\textnormal{col}}$. In both cases, the outer corner with no arrow pointing to it is the image of $\la$.}
\end{figure}
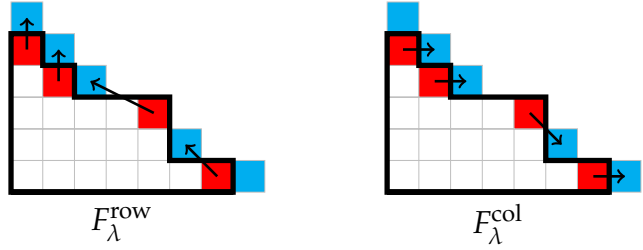

We call a collection of bijections $F_\bullet = \{F_\lambda \, | \, \la \in \bb{Y}\}$ a set of \emph{local growth rules}. Each set of local growth rules $F_\bullet$ determines a bijection
\[
\RS_{F_\bullet} : S_n \rightarrow \bigsqcup_{\la \vdash n} \SYT(\la) \times \SYT(\la).
\]
The bijection $\RS_{F_\bullet}$ is best understood using the formalism of Fomin's growth diagrams \cite{Fomin86}. This is explained in detail in \cite[\S 2.3]{AF_qrst}. However, since it would take too much space to introduce growth diagrams here, we instead describe $\RS_{F_\bullet}$ as an insertion algorithm.

\begin{defn}
\label{def_growth_insertion}
Let $F_\bullet$ be a set of local growth rules. Let $T$ be a partial standard Young tableau, and $k$ a number which is not an entry of $T$. Define the \emph{$F_\bullet$-insertion} of $k$ into $T$, denoted $T \xleftarrow{F_\bullet} k$, as follows:
\begin{itemize}
\item \textbf{Initial insertion step:} Place $k$ in the outer corner of $T^{(k)}$ corresponding to the partition $F_{T^{(k)}}(T^{(k)}) \in \mc{U}(T^{(k)})$. If this cell is occupied in $T$ by an entry $k' > k$, the entry $k'$ is displaced. Otherwise, the process terminates.
\item \textbf{Bumping step(s):} If an entry $z$ of $T$ is displaced by a smaller number, place $z$ in the outer corner of $T^{(z)}$ corresponding to $F_{T^{(z)}}(T^{(z-1)}) \in \mc{U}(T^{(z)})$. If $z$ displaces an entry $z' > z$, repeat this step for $z'$. Otherwise, the process terminates.
\end{itemize}

To compute $\RS_{F_\bullet} : \sigma \mapsto (P,Q)$, write $\sigma = \sigma_1 \cdots \sigma_n$ in one-line notation. Set $P_0 = \emptyset$, and recursively define
$
P_i = P_{i-1} \xleftarrow{F_\bullet} \sigma_i
$
for $i = 1, \ldots, n$. The \emph{insertion tableau} $P$ is the standard Young tableau $P_n$ obtained at the end of this process. The \emph{recording tableau} $Q$ is the standard Young tableau such that $Q^{(i)}$ is the shape of $P_i$ for each $i$; in other words, $Q$ contains an $i$ in the cell which was added to $P_{i-1}$ by the insertion of $\sigma_i$.
\end{defn}

The special case $\RS_{F_\bullet^{\row}}$ is the row insertion version of RS (as defined in, e.g., \cite[Ch. 7.11]{EC2}): each entry of $\sigma$ is initially inserted into the first row, and each displaced number is bumped to the next row. Similarly, $\RS_{F_\bullet^{\col}}$ is the column insertion version of RS. It follows easily from the perspective of growth diagrams that each $\RS_{F_\bullet}$ is a bijection, and moreover, that $\RS_{F_\bullet}(\sigma) = (P,Q) \iff \RS_{F_\bullet}(\sigma^{-1}) = (Q,P)$.


\section{Macdonald polynomials}
\label{sec_Mac}

\subsection{Monomial expansion of Macdonald polynomials}
\label{sec: Monomial expansion}

The \emph{Macdonald symmetric functions} $P_\la(\mb{x}; q,t)$ and $Q_\la(\mb{x}; q,t)$ are two families of symmetric functions\footnote{We refer to $P_\la$ and $Q_\la$ as \emph{Macdonald polynomials}, even though they are not actually polynomials.} in variables $\mb{x} = (x_1, x_2, \ldots)$, with coefficients in the field $\bb{Q}(q,t)$ of rational functions in $q$ and $t$. Both $P_\la(\mb{x};q,t)$ and $Q_\la(\mb{x};q,t)$ specialize to the Schur function $s_\la(\mb{x})$ when $q = t$. Macdonald originally defined the $P_\la$ and $Q_\la$ rather indirectly by a linear algebraic criterion, and then he derived explicit formulas for the monomial expansions of $P_\la$ and $Q_\la$ as weighted sums over semistandard Young tableaux of shape $\la$, generalizing the combinatorial formula for $s_\la$. We will take the somewhat unusual perspective of viewing these monomial expansions as the \emph{definition} of the Macdonald polynomials. To describe the expansions, we need some notation.

For a cell $c \in \lambda$, define its \emph{arm-length} $a_\lambda(c)$ (resp., \emph{leg-length} $\ell_\la(c)$) to be the number of cells in the Young diagram of $\la$ that are strictly to the right of (resp., strictly above) $c$, and define its \emph{hook-length} by $h_\lambda(c)=a_\lambda(c)+\ell_\lambda(c)+1$. For example, the Young diagram of $\la = (8,6,3,3,1)$ is shown below. The cell $c$ has $a_\la(c) = 6$, $\ell_\la(c) = 3$, and $h_\la(c) = 10$.

\footnotesize
\begin{center}
\begin{tikzpicture}[scale=0.7]
\Yboxdim{16 pt}
\Ylinecolour{lightgray}
\tyng(0,0,8,6,3,3,1)
\Ylinecolour{black}
\tgyoung(0pt,0pt,:;)
\node at (24pt,8pt) {$c$};
\node at (80pt,8pt) {$a_\lambda(c)$};
\draw[->] (102pt,8pt) -- (122pt,8pt);
\draw[->] (58pt,8pt) -- (38pt,8pt);
\node at (24pt,40pt) {$\ell_\lambda(c)$};
\draw[<-] (24pt,20pt) -- (24pt,30pt);
\draw[->] (24pt,50pt) -- (24pt,60pt);
\end{tikzpicture}
\end{center}
\normalsize

For $c \in \la$, define $b_\lambda(c) = \frac{[h_\la(c)]^\ell}{[h_\la(c)]^a}$, where $[h_\la(c)]^\ell = 1-q^{a_\la(c)}t^{\ell_\la(c)+1}$ and $[h_\la(c)]^a = 1-q^{a_\la(c)+1}t^{\ell_\la(c)}$ are two different $(q,t)$-analogues of the hook-length $h_\la(c)$. For $\mu \subseteq \la$, let $\mc{R}_{\la/\mu}$ (resp., $\mc{C}_{\la/\mu}$) be the set of all cells of $\mu$ which are in a row (resp., column) containing a cell of $\la/\mu$, and define\footnote{In \cite{Mac}, $\mc{R}_{\la/\mu}$ and $\mc{C}_{\la/\mu}$ are defined to include the cells in $\la/\mu$, so that $\vp_{\la/\mu}$ is just a product over $\mc{C}_{\la/\mu}$.}
\[
\psi_{\la/\mu}(q,t) = \prod_{c \in \mc{R}_{\la/\mu} - \, \mc{C}_{\la/\mu}} \dfrac{b_\mu(c)}{b_\la(c)}, \qquad\quad
\vp_{\la/\mu}(q,t) = \prod_{c \in \lambda/\mu}b_\lambda(c) \prod_{c \in \mc{C}_{\la/\mu}} \dfrac{b_\la(c)}{b_\mu(c)}.
\]
For a semistandard Young tableau $T$, define
\[
\psi_T(q,t) = \prod_{i \geq 1} \psi_{T^{(i)}/T^{(i-1)}}(q,t), \quad\qquad
\vp_T(q,t) = \prod_{i \geq 1} \vp_{T^{(i)}/T^{(i-1)}}(q,t),
\]
and let $\mb{x}^T = x_1^{\# \{1\text{'s in } T\}} x_2^{\# \{2\text{'s in } T\}} \cdots$.

\begin{thm}[{\cite[Ch. VI ($7.13,7.13'$)]{Mac}}]
\label{thm_Mac_monomial}
The Macdonald polynomials have monomial expansions
\[
P_\la (\mathbf{x}; q,t) = \sum_{T \in \SSYT(\la)} \psi_T(q,t) \mb{x}^T, \qquad\quad
Q_\la (\mathbf{x}; q,t) = \sum_{T \in \SSYT(\la)} \vp_T(q,t) \mb{x}^T.
\]
\end{thm}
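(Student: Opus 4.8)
The statement is due to Macdonald, so the plan is to recover both tableau formulas from his defining characterization: $P_\la$ is the unique symmetric function of the form $m_\la + \sum_{\mu<\la}u_{\la\mu}m_\mu$ (triangular for the dominance order) that is orthogonal to every $P_\mu$ with $\mu\ne\la$ under the scalar product normalized by $\langle p_\la,p_\mu\rangle = \delta_{\la\mu}\,z_\la\prod_i\frac{1-q^{\la_i}}{1-t^{\la_i}}$, and $Q_\la$ is its dual basis, so that $Q_\la = b_\la P_\la$ with $b_\la = \prod_{c\in\la}b_\la(c)$. The key reduction is pure bookkeeping: a tableau $T\in\SSYT(\la)$ is precisely a chain $\emptyset = T^{(0)}\subseteq T^{(1)}\subseteq\cdots\subseteq T^{(n)}=\la$ in which each skew shape $T^{(i)}/T^{(i-1)}$ is a horizontal strip, and $\psi_T=\prod_i\psi_{T^{(i)}/T^{(i-1)}}$ by definition. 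Hence the $P$-formula follows by induction on the number of variables from the single-step \emph{branching rule}
\[
P_\la(x_1,\dots,x_n;q,t)=\sum_{\mu}\psi_{\la/\mu}(q,t)\,x_n^{|\la/\mu|}\,P_\mu(x_1,\dots,x_{n-1};q,t),
\]
the sum over $\mu$ with $\la/\mu$ a horizontal strip. Once the $P$-formula is in place, the $Q$-formula is automatic: since $b_\la(c)=b_\mu(c)$ for every cell $c$ lying outside all rows and columns of $\la/\mu$, the surviving factors are indexed by $\mc{C}_{\la/\mu}\cup(\mc{R}_{\la/\mu}-\mc{C}_{\la/\mu})=\mc{R}_{\la/\mu}\cup\mc{C}_{\la/\mu}$, which yields $\vp_{\la/\mu}=\tfrac{b_\la}{b_\mu}\psi_{\la/\mu}$. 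This ratio telescopes along the chain to give $\vp_T=b_\la\,\psi_T$, whence $\sum_T\vp_T\,\mb{x}^T=b_\la P_\la=Q_\la$.

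Everything therefore reduces to the branching rule. I would derive it from the coproduct identity $P_\la(X\cup\{x_n\})=\sum_\mu P_{\la/\mu}(x_n)\,P_\mu(X)$ together with the one-variable evaluation $P_{\la/\mu}(x)=\psi_{\la/\mu}(q,t)\,x^{|\la/\mu|}$, which is nonzero precisely when $\la/\mu$ is a horizontal strip. The skew function $P_{\la/\mu}=Q_\mu^\perp P_\la$ is the image of $P_\la$ under the operator adjoint to multiplication by $Q_\mu$, so that $\langle P_{\la/\mu},f\rangle=\langle P_\la,Q_\mu f\rangle$ for every $f$; its one-variable coefficients are thus dual to the structure constants of the \emph{Pieri rule} $P_\mu\,g_r=\sum_\la\vp_{\la/\mu}(q,t)\,P_\la$ (the sum over $\la$ with $\la/\mu$ a horizontal $r$-strip, $g_r=Q_{(r)}$), and the relation $Q_\la=b_\la P_\la$ rewrites them in terms of $\psi$. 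Granting the Pieri rule, this passage is purely formal.

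The genuine difficulty, and the main obstacle, is the Pieri rule itself, together with the explicit identification of its structure constants as the stated products over $\mc{R}_{\la/\mu}$ and $\mc{C}_{\la/\mu}$. Duality forces $\vp_{\la/\mu}=\langle P_\mu\,g_r,Q_\la\rangle$, but evaluating this scalar product in closed form is the technical heart of the argument. I would first prove the rule for adding a single box, where the ratios $b_\la(c)/b_\mu(c)$ can be tracked one arm- and leg-length at a time, and then build up an arbitrary horizontal strip cell by cell, verifying that the accumulated product collapses to the claimed expression. The crux is to check that adding each cell of $\la/\mu$ alters the arm- or leg-length only of cells in its own row or column, so that exactly the factors recorded by $\mc{R}_{\la/\mu}$ and $\mc{C}_{\la/\mu}$ survive; matching the analytically defined coefficients against these combinatorial products through all the cancellations is where essentially all the work lies. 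The surrounding induction on the number of variables and the passage from chains to tableaux are routine by comparison.
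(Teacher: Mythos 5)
This theorem is not proved in the paper at all: it is imported from Macdonald's book, and the paper explicitly inverts the logic by taking these monomial expansions as the \emph{definition} of $P_\la$ and $Q_\la$ (which is precisely why the authors must later prove the commutation relation of Theorem \ref{thm: q,t commutator} directly rather than deducing it from Macdonald's Cauchy identity). So the comparison is really between your sketch and Macdonald's own argument. Your formal scaffolding is correct and is indeed Macdonald's route: the identification of a $\SSYT$ with a chain of horizontal strips, the reduction of the $P$-formula to the one-variable branching rule via the skew expansion $P_\la(X\cup\{x_n\})=\sum_\mu P_{\la/\mu}(x_n)P_\mu(X)$, the duality $P_{\la/\mu}(x)=\sum_r\langle P_\la,Q_\mu g_r\rangle x^r$ linking $\psi$ to the Pieri coefficients $\vp$, and the telescoping $\vp_T=b_\la\psi_T$ (which follows, as you say, from $b_\la(c)=b_\mu(c)$ for cells outside $\mc{R}_{\la/\mu}\cup\mc{C}_{\la/\mu}$, so that $\vp_{\la/\mu}/\psi_{\la/\mu}=b_\la/b_\mu$) giving the $Q$-formula for free.

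The gap is in the one step you correctly identify as the heart of the matter but then dispose of with an unworkable plan. Proving the Pieri rule $P_\mu g_r=\sum_\la\vp_{\la/\mu}P_\la$ by ``first adding a single box and then building up the horizontal strip cell by cell'' does not go through: iterating the single-box rule computes $P_\mu g_1^r$, not $P_\mu g_r$, and the chains of single-box additions it produces are not in bijection with horizontal $r$-strips, so there is no telescoping product to ``collapse to the claimed expression.'' Macdonald's actual proof is of a different character entirely: it rests on the norm formula $\langle P_\la,P_\la\rangle=b_\la^{-1}$ and the principal specialization $P_\la(1,t,\dots,t^{n-1};q,t)$ (the evaluation homomorphism $\epsilon_{u,t}$), combined with an induction on the number of variables; the explicit products over $\mc{R}_{\la/\mu}$ and $\mc{C}_{\la/\mu}$ emerge from manipulating those closed forms, not from a cell-by-cell accretion. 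As written, your proposal defers essentially all of the content of the theorem to a lemma whose proposed proof strategy would fail, so it should be regarded as an accurate map of the reduction rather than a proof.
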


\subsection{The generalized Cauchy identity}
\label{sec_qt_up_down}

Using the linear algebraic definition of the Macdonald polynomials, Macdonald proved the following generalization of the classical Cauchy identity \eqref{eq_intro_Cauchy}.

\begin{thm}[{\cite[Ch. VI (4.13)]{Mac}}]
\label{thm_Cauchy_Mac}
For $\mb{x} = (x_1, x_2, \ldots)$ and $\mb{y} = (y_1, y_2, \ldots)$, we have
\begin{equation}
\label{eq_Cauchy_Mac}
\prod_{i,j \geq 1} \dfrac{(1-tx_iy_j)(1-qtx_iy_j)(1-q^2tx_iy_j) \cdots}{(1-qx_iy_j)(1-q^2x_iy_j)(1-q^3x_iy_j) \cdots} = \sum_\la P_\la(\mb{x};q,t) Q_\la(\mb{y};q,t).
\end{equation}
\end{thm}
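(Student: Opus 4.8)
The plan is to prove \eqref{eq_Cauchy_Mac} by induction on the number $n$ of $\mb{x}$-variables, taking the combinatorial definition in Theorem~\ref{thm_Mac_monomial} as the starting point. Write $\Pi(\mb{x};\mb{y})$ for the left-hand side of \eqref{eq_Cauchy_Mac}; since the kernel is a product over pairs $(i,j)$, it factors as $\Pi(x_1,\ldots,x_n;\mb{y}) = \prod_{i=1}^{n}\Pi_0(x_i;\mb{y})$, where $\Pi_0(x;\mb{y})$ is the single-variable kernel obtained by setting $\mb{x}=(x)$. The case $n=0$ is trivial, and the identity in infinitely many variables follows from the finite cases by stability, so it suffices to carry out the inductive step from $n-1$ to $n$.

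First I would record the branching rule. Separating off the cells of a semistandard tableau that carry the largest entry $n$ (so that the residual tableau has straight shape), Theorem~\ref{thm_Mac_monomial} gives
\[
P_\la(x_1,\ldots,x_n;q,t) = \sum_{\mu}\psi_{\la/\mu}(q,t)\,x_n^{|\la/\mu|}\,P_\mu(x_1,\ldots,x_{n-1};q,t),
\]
the sum ranging over $\mu\subseteq\la$ with $\la/\mu$ a horizontal strip. Substituting this into $\sum_\la P_\la(\mb{x})Q_\la(\mb{y})$, interchanging the order of summation, and using both the induction hypothesis and the factorization $\Pi(x_1,\ldots,x_n;\mb{y}) = \Pi(x_1,\ldots,x_{n-1};\mb{y})\cdot\Pi_0(x_n;\mb{y})$, one sees (by linear independence of the $P_\mu(x_1,\ldots,x_{n-1})$) that the whole identity reduces to the single-variable claim, which I will call $(\star)$:
\[
\sum_{\la\supseteq\mu}\psi_{\la/\mu}(q,t)\,x^{|\la/\mu|}\,Q_\la(\mb{y};q,t) = Q_\mu(\mb{y};q,t)\,\Pi_0(x;\mb{y}),
\]
valid for every partition $\mu$, the sum again being over horizontal strips $\la/\mu$.

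Identity $(\star)$ is the crux, and I expect it to be the main obstacle. Its base case $\mu=\emptyset$ is the single-row generating function $\sum_{r\geq 0}Q_{(r)}(\mb{y};q,t)\,x^r = \Pi_0(x;\mb{y})$, which can be checked directly from Theorem~\ref{thm_Mac_monomial}: a tableau of one-row shape $(r)$ is a weakly increasing word, and since $\mc{C}_{(a)/(b)}=\emptyset$ for single rows, the coefficient $\vp_{(a)/(b)}(q,t)$ collapses to a telescoping product of the $b_{(a)}(c)$ that reproduces the single-variable kernel. For general $\mu$, extracting the coefficient of $x^r$ in $(\star)$ is exactly the Macdonald--Pieri rule $Q_\mu\cdot Q_{(r)} = \sum_\la\psi_{\la/\mu}\,Q_\la$ (summed over horizontal $r$-strips $\la/\mu$); equivalently, writing $b_\la:=\prod_{c\in\la}b_\la(c)$ and using the standard identity $\vp_{\la/\mu}/\psi_{\la/\mu}=b_\la/b_\mu$ relating the two coefficients, it is the rule $P_\mu\cdot Q_{(r)}=\sum_\la\vp_{\la/\mu}\,P_\la$.

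The genuine difficulty is that this Pieri rule and the Cauchy identity are essentially interderivable, so some input beyond the raw monomial formula is unavoidable. I see two routes. The first is Macdonald's original argument: equip the symmetric functions with the $(q,t)$-deformed Hall inner product determined by $\langle p_\la,p_\mu\rangle_{q,t}=\delta_{\la\mu}\,z_\la\prod_i\frac{1-q^{\la_i}}{1-t^{\la_i}}$ on the power-sum basis, expand the kernel $\Pi$ in power sums, and observe that $\sum_\la P_\la(\mb{x})Q_\la(\mb{y})=\Pi(\mb{x};\mb{y})$ is equivalent to the duality $\langle P_\la,Q_\mu\rangle_{q,t}=\delta_{\la\mu}$; this duality is in turn deduced from the triangularity of $P_\la$ against the monomial basis together with the self-adjointness of a suitable operator. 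This route is clean but lies outside the purely combinatorial framework adopted here. The second route, more in the spirit of the present abstract, is to prove $(\star)$ bijectively: one would seek a weight-preserving $(q,t)$-analogue of RSK sending a nonnegative integer matrix to a pair $(P,Q)$ of semistandard tableaux of common shape, with transition probabilities arranged so that the total weight landing on $(P,Q)$ equals $\psi_P(q,t)\,\vp_Q(q,t)$. The squarefree specialization of such a correspondence is precisely $\qrst$, and summing the total probability $1$ over all matrices with prescribed row and column sums would yield \eqref{eq_Cauchy_Mac} directly.
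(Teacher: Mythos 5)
The first thing to note is that the paper does not prove this statement at all: Theorem~\ref{thm_Cauchy_Mac} is imported verbatim from Macdonald's book, where it is established via the linear-algebraic definition of $P_\la$ and the $(q,t)$-deformed Hall inner product. The paper's original contribution is a self-contained proof of the squarefree coefficient identity \eqref{eq: squarefree Macdonald Cauchy} only, via the probabilistic bijection $\mc{P}_\la, \ov{\mc{P}}_\la$; the full Cauchy identity is deliberately treated as a black box. So there is no internal proof to compare against, and the relevant question is whether your proposal stands on its own.

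It does not, and you have identified the gap yourself. The reduction of \eqref{eq_Cauchy_Mac} to the single-variable identity $(\star)$ via the branching rule is correct and standard, as is the observation that the coefficient of $x^r$ in $(\star)$ is the Pieri rule $Q_\mu Q_{(r)} = \sum_\la \psi_{\la/\mu} Q_\la$. But, as you say, the Pieri rule with these specific coefficients is essentially equivalent to the orthogonality $\langle P_\la, Q_\mu\rangle_{q,t} = \delta_{\la\mu}$, i.e.\ to the Cauchy identity itself, so the reduction is circular unless one of your two escape routes is actually carried out. Route one is Macdonald's own proof, which requires abandoning the stated starting point (the monomial expansions of Theorem~\ref{thm_Mac_monomial} taken as definitions) in favor of the inner-product definition --- at which point the inductive scaffolding is unnecessary, since Macdonald obtains \eqref{eq_Cauchy_Mac} directly from the duality and the power-sum expansion of the kernel. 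Route two, a full weight-preserving $(q,t)$-RSK on nonnegative integer matrices, is not available: the paper's $\qrst$ handles only the squarefree part, and no such correspondence for general matrices is constructed or claimed anywhere in this abstract. Stripped of the (correct but non-load-bearing) reductions, the only complete argument your proposal contains is the citation to Macdonald, which is exactly what the paper does. To produce a proof genuinely within the combinatorial framework adopted here, the missing ingredient is an independent derivation of the Pieri rule from the tableau formulas, and nothing in the proposal or the paper supplies it.
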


In this abstract we are interested, as in the discussion of the Schur case in \S \ref{sec_RS}, in the coefficients of the squarefree monomial $x_1 \cdots x_n y_1 \cdots y_n$ on either side of \eqref{eq_Cauchy_Mac}. Using the monomial expansions of $P_\la$ and $Q_\la$ on the right-hand side, we obtain the identity
\begin{equation}
\label{eq: squarefree Macdonald Cauchy}
\dfrac{(1-t)^n}{(1-q)^n} n! = \sum_{\la \vdash n} \sum_{P,Q} \psi_P(q,t) \vp_Q(q,t),
\end{equation}
where the inner sum is over pairs of standard Young tableaux of shape $\la$. Note that since $b_\la(c) = 1$ when $q = t$, this formula reduces to \eqref{eq_intro_RS_Cauchy} in the Schur specialization $q = t$.

The goal of this abstract is to give a direct proof of \eqref{eq: squarefree Macdonald Cauchy}, taking the monomial expansions of Theorem \ref{thm_Mac_monomial} as the definition of $P_\la$ and $Q_\la$. To this end, define $(q,t)$-analogues of the up and down operators on Young's lattice by
\[
U_{q,t} \la = \sum_{\nu \in \U(\la)} \psi_{\nu/\la}(q,t) \, \nu, \qquad\qquad
D_{q,t} \la = \sum_{\mu \in \D(\la)} \vp_{\la/\mu}(q,t) \, \mu.
\]
It is clear that the right-hand side of \eqref{eq: squarefree Macdonald Cauchy} is equal to $\langle D_{q,t}^nU_{q,t}^n \emptyset, \emptyset \rangle$. Thus, \eqref{eq: squarefree Macdonald Cauchy} can be deduced by induction on $n$ from the following commutation relation.

\begin{thm}
\label{thm: q,t commutator}
The $(q,t)$-up and down operators satisfy the commutation relation
\[
D_{q,t}U_{q,t}-U_{q,t}D_{q,t} = \frac{1-t}{1-q} I.
\]
\end{thm}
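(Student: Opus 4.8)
The plan is to prove the operator identity by applying both sides to an arbitrary partition $\la$ and comparing coefficients of each partition $\rho$ on either side. Expanding, $D_{q,t}U_{q,t}\la = \sum_{\nu \gtrdot \la}\psi_{\nu/\la}\sum_{\rho \lessdot \nu}\vp_{\nu/\rho}\,\rho$ and $U_{q,t}D_{q,t}\la = \sum_{\mu \lessdot \la}\vp_{\la/\mu}\sum_{\rho \gtrdot \mu}\psi_{\rho/\mu}\,\rho$. So the claim reduces to the scalar identities, indexed by pairs $(\la,\rho)$,
\begin{equation}
\label{eq_plan_main}
\sum_{\nu \,:\, \la,\rho \lessdot \nu}\psi_{\nu/\la}(q,t)\,\vp_{\nu/\rho}(q,t) \;-\; \sum_{\mu \,:\, \mu \lessdot \la,\rho}\vp_{\la/\mu}(q,t)\,\psi_{\rho/\mu}(q,t) \;=\; \frac{1-t}{1-q}\,[\la=\rho].
\end{equation}
I would split into two cases exactly as the excerpt does for the Schur case. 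For $\la \neq \rho$, the right side vanishes, and each sum has at most one term: there is a common cover $\nu = \la \cup \rho$ precisely when $\la,\rho$ differ by moving one cell, and a common lower cover $\mu = \la \cap \rho$ in the same situation. So the off-diagonal case requires showing the single surviving product on the left equals the single surviving product on the right, i.e. $\psi_{\nu/\la}\vp_{\nu/\rho} = \vp_{\la/\mu}\psi_{\rho/\mu}$ when $\mu \lessdot \la,\rho \lessdot \nu$ form a diamond.

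For the diagonal case $\la = \rho$, the target is the scalar $\tfrac{1-t}{1-q}$, and the identity becomes
\begin{equation}
\label{eq_plan_diag}
\sum_{\nu \gtrdot \la}\psi_{\nu/\la}(q,t)\,\vp_{\nu/\la}(q,t) \;-\; \sum_{\mu \lessdot \la}\vp_{\la/\mu}(q,t)\,\psi_{\la/\mu}(q,t) \;=\; \frac{1-t}{1-q}.
\end{equation}
Here each outer corner $\nu/\la$ and inner corner $\la/\mu$ contributes a rational function in $q,t$ built from the $b$-values $b_\la(c) = [h_\la(c)]^\ell/[h_\la(c)]^a$ along the affected row and column. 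My approach would be to expand $\psi$ and $\vp$ explicitly using their definitions in terms of $\mc{R}_{\la/\mu} - \mc{C}_{\la/\mu}$ and $\mc{C}_{\la/\mu}$, and observe that when a single cell is added or removed, the cells contributing to $\psi_{\nu/\la}\vp_{\nu/\la}$ are exactly those in the row and column of the changed cell. This is the classical hook-length computation underlying Pieri-type identities for Macdonald polynomials: the products telescope in a way that leaves a ratio of $[h]^\ell$ and $[h]^a$ factors over the arm and leg hooks in the relevant row and column.

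The main obstacle is \eqref{eq_plan_diag}: it is not a term-by-term identity but a genuine summation identity, where the corner contributions must combine to produce the constant $\tfrac{1-t}{1-q}$. I expect the cleanest route is to recognize the left side as a known specialization of a Macdonald Pieri rule. Concretely, the coefficient identity $\sum_{\nu \gtrdot \la}\psi_{\nu/\la}\vp_{\nu/\la}$ is exactly the coefficient of $P_\la$ in $D_{q,t}(U_{q,t}\la)$ restricted to the diagonal, and one can evaluate both corner sums by the partial-fraction / telescoping identity for products of the form $\prod_c \tfrac{1-q^{a}t^{\ell+1}}{1-q^{a+1}t^{\ell}}$ over a single row or column; summing over corners telescopes to a boundary term. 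Organizing the bookkeeping of which $b$-factors survive in $\mc{R}-\mc{C}$ versus $\mc{C}$ when passing between $\la$ and a cover $\nu$ will be the delicate part, and I would verify the telescoping by tracking how each hook-length changes by one unit of arm or leg as the corner cell sweeps across the boundary of $\la$. Once the corner sums are evaluated in closed form, subtracting them and simplifying should collapse to $\tfrac{1-t}{1-q}$, completing the induction that yields \eqref{eq: squarefree Macdonald Cauchy}.
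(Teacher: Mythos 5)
Your reduction is the same as the paper's: apply both sides to $\la$, compare coefficients of $\rho$, dispose of the off-diagonal case via the unique diamond $\mu = \la \cap \rho \lessdot \la, \rho \lessdot \nu = \la \cup \rho$, and reduce the diagonal case to the single-partition identity \eqref{eq_la_la_qt}. (The off-diagonal identity $\psi_{\nu/\la}\vp_{\nu/\rho} = \vp_{\la/\mu}\psi_{\rho/\mu}$ does need a short verification, which you state but do not carry out; since the two added cells lie in distinct rows and distinct columns, the relevant $\mc{R}$- and $\mc{C}$-sets match up and this is routine.)

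The gap is in your treatment of the diagonal identity \eqref{eq_la_la_qt}, which is the entire content of the theorem. Your plan offers two mechanisms, neither of which is made to work. First, appealing to ``a known specialization of a Macdonald Pieri rule'' is essentially circular for the paper's purposes: the Pieri coefficients $\psi_{\nu/\la}, \vp_{\la/\mu}$ are usually derived from Macdonald's linear-algebraic setup, whereas here the monomial expansions are taken as the \emph{definition}, and \eqref{eq_la_la_qt} is exactly what must be proved from scratch. Second, the claim that ``summing over corners telescopes to a boundary term'' is unsubstantiated. The per-corner products over a single row or column do simplify (that part is standard bookkeeping), but the resulting sum over outer corners minus the sum over inner corners does not telescope term by term --- indeed the paper points out that no bijection $F_\la : \D^*(\la) \to \mc{U}(\la)$ can match the terms, already for $\la = (1)$. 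What the paper actually does is interpolate between all such bijections: it splits each weight as $\omega_\la(\mu) = \alpha_{\la/\mu}/\ov{\alpha}_{\la/\mu}$, defines explicit transition probabilities $\mc{P}_\la(\mu \to \nu)$, $\ov{\mc{P}}_\la(\mu \leftarrow \nu)$ (Definition \ref{def_probs}), and verifies the two sum-to-one conditions of Theorem \ref{thm_prob_bij} by rewriting the probabilities in terms of the corner parameters of $\la$ and applying Lagrange interpolation --- this is the rigorous incarnation of the partial-fraction idea you gesture at. Without either constructing such a probabilistic bijection or actually executing a residue/partial-fraction evaluation of the two corner sums in closed form, your argument does not reach the constant $\frac{1-t}{1-q}$.
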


Reasoning as in \S \ref{sec_RS}, it is straightforward to reduce the proof of this commutation relation to the proof of the identity
\begin{equation}
\label{eq_la_la_qt}
\sum_{\nu \in \U(\la)} \psi_{\nu/\la}(q,t) \vp_{\nu/\la}(q,t) = \dfrac{1-t}{1-q} + \sum_{\mu \in \D(\la)} \psi_{\la/\mu}(q,t) \vp_{\la/\mu}(q,t)
\end{equation}
for each partition $\la$. In contrast to the situation in \S \ref{sec_RS}, however, \eqref{eq_la_la_qt} cannot be proved by a bijection $F_\la : \mc{D}^*(\la) \rightarrow \mc{U}(\la)$ (this can be seen in any example, the simplest of which is $\la = (1)$). Instead, we will ``probabilistically superimpose'' all the possible bijections $F_\la$.

\begin{rem}
It is possible to derive Theorem \ref{thm: q,t commutator} from the generalized Cauchy identity \eqref{eq_Cauchy_Mac}. Our philosophy, however, is that an explicit probabilistic proof of Theorem \ref{thm: q,t commutator}, and the resulting probabilistic insertion algorithm, is desirable for its own sake.
\end{rem}


\section{Definition of $\qrst$}
\label{sec_qrst}

\subsection{Probabilistic bijections}
\label{sec: probabilistic bijections}

The following definition is due to Bufetov and Petrov \cite{BufetovPetrov19}, although they use the name ``bijectivization.'' This notion also plays an important role in \cite{BufetovMatveev18}.
\begin{defn}
\label{def: prob bij}
Let $X$ and $Y$ be finite sets equipped with weight functions $\omega : X \rightarrow k$, $\ov{\omega} : Y \rightarrow k$, where $k$ is a field. A \emph{probabilistic bijection} from $(X,\omega)$ to $(Y,\ov{\omega})$ is a pair of maps $\mc{P},\ov{\mc{P}} : X \times Y \rightarrow k$ satisfying
\begin{enumerate}
\item For each $x \in X$, $\ds \sum_{y \in Y} \mc{P}(x,y) = 1$, and for each $y \in Y$, $\ds \sum_{x \in X} \ov{\mc{P}}(x,y) = 1$.
\item For each $x \in X$ and $y \in Y$, $\ds \omega(x)\mc{P}(x,y) = \ov{\mc{P}}(x,y)\ov{\omega}(y)$.
\end{enumerate}
We will write $\mc{P}(x \rightarrow y)$ for $\mc{P}(x,y)$ and $\ov{\mc{P}}(x \leftarrow y)$ for $\ov{\mc{P}}(x,y)$, and think of $\mc{P}(x \rightarrow y)$ as the probability of moving from $x$ to $y$, and $\ov{\mc{P}}(x \leftarrow y)$ as the probability of moving from $y$ to $x$. We find this terminology convenient even though we do not require that these expressions lie in $[0,1]$, or even that they be real-valued.
\end{defn}

It is easy to see that a probabilistic bijection from $(X,\omega)$ to $(Y,\ov{\omega})$ proves the identity
\[
\sum_{x \in X} \omega(x) = \sum_{y \in Y} \ov{\omega}(y).
\]

\subsection{A probabilistic bijection between $(\mc{D}^*(\la), \omega_\la)$ and $(\mc{U}(\la), \ov{\omega}_\la)$}
\label{sec_weighted_sets}

Define weight functions $\omega_\la : \mc{D}^*(\la) \rightarrow \bb{Q}(q,t)$ and $\ov{\omega}_\la : \mc{U}(\la) \rightarrow \bb{Q}(q,t)$ by
\[
\omega_\la(\mu) = \begin{cases} 
1 & \text{ if } \mu = \la \bigskip \\
\ds \prod_{c \in \mc{R}_{\la/\mu}} \dfrac{b_\mu(c)}{b_\la(c)} \prod_{c \in \mc{C}_{\la/\mu}} \dfrac{b_\la(c)}{b_\mu(c)} & \text{ if } \mu \in \mc{D}(\la),
\end{cases} \quad\quad
\ov{\omega}_\la(\nu) = \prod_{c \in \mc{R}_{\nu/\la}} \dfrac{b_\la(c)}{b_\nu(c)} \prod_{c \in \mc{C}_{\nu/\la}} \dfrac{b_\nu(c)}{b_\la(c)}.
\]
Using this notation, equation \eqref{eq_la_la_qt} becomes (after dividing both sides by $\frac{1-t}{1-q}$) 
\begin{equation}
\label{eq_weights}
\sum_{\mu \in \mc{D}^*(\la)} \omega_\la(\mu) = \sum_{\nu \in \mc{U}(\la)} \ov{\omega}_\la(\nu).
\end{equation}
We will prove \eqref{eq_weights} by giving a probabilistic bijection $\mc{P}_\la, \ov{\mc{P}}_\la$ from $(\mc{D}^*(\la), \omega_\la)$ to $(\mc{U}(\la), \ov{\omega}_\la)$.

The key to defining the probabilities $\mc{P}_\la$ and $\ov{\mc{P}}_\la$ is to split the weights $\omega_\la(\mu)$ and $\ov{\omega}_\la(\nu)$ into two pieces. For partitions $\rho \lessdot \kappa$, define
\[
\alpha_{\kappa/\rho} = \prod_{c \in \mc{R}_{\kappa/\rho}} \dfrac{[h_\rho(c)]^\ell}{[h_\kappa(c)]^\ell} \prod_{c \in \mc{C}_{\kappa/\rho}} \dfrac{[h_\rho(c)]^a}{[h_\kappa(c)]^a}, \qquad\quad \ov{\alpha}_{\kappa/\rho} = \prod_{c \in \mc{R}_{\kappa/\rho}} \dfrac{[h_\rho(c)]^a}{[h_\kappa(c)]^a} \prod_{c \in \mc{C}_{\kappa/\rho}} \dfrac{[h_\rho(c)]^\ell}{[h_\kappa(c)]^\ell}.
\]
Since $b_\kappa(c) = \frac{[h_\kappa(c)]^\ell}{[h_\kappa(c)]^a}$, we see immediately that for $\mu \in \mc{D}(\la)$ and $\nu \in \mc{U}(\la)$, we have
\begin{equation}
\label{eq_omega_alpha}
\omega_\la(\mu) = \dfrac{\alpha_{\la/\mu}}{\ov{\alpha}_{\la/\mu}}, \quad\quad \ov{\omega}_\la(\nu) = \dfrac{\alpha_{\nu/\la}}{\ov{\alpha}_{\nu/\la}}.
\end{equation}

\begin{defn}
\label{def_probs}
For $\mu \in \mc{D}^*(\la)$ and $\nu \in \mc{U}(\la)$, define
\[
\mc{P}_\la(\mu \rightarrow \nu) = \begin{cases}
t^{r_\nu-1} \alpha_{\nu/\la} & \text{ if } \mu = \la \medskip \\
t^{r_\nu-r_\mu-1} \dfrac{\alpha_{\nu/\la}}{\alpha_{\la/\mu}}\eta_{\nu/\la/\mu} & \text{ if } \mu \in \mc{D}(\la),
\end{cases}
\]
\[
\ov{\mc{P}}_\la(\mu \leftarrow \nu) = \begin{cases}
t^{r_\nu-1} \ov{\alpha}_{\nu/\la} & \text{ if } \mu = \la \medskip \\
t^{r_\nu-r_\mu-1} \dfrac{\ov{\alpha}_{\nu/\la}}{\ov{\alpha}_{\la/\mu}}\eta_{\nu/\la/\mu} & \text{ if } \mu \in \mc{D}(\la),
\end{cases}
\]
where
\[
\eta_{\nu/\la/\mu} = \dfrac{(1-q)(1-t)}{(1-q^{c_\mu-c_\nu}t^{r_\nu-r_\mu})(1-q^{c_\mu-c_\nu+1}t^{r_\nu-r_\mu-1})},
\]
and the cell $\nu/\la$ (resp., $\la/\mu$) is located in row $r_\nu$ and column $c_\nu$ (resp., row $r_\mu$ and column $c_\mu$).
\end{defn}

\begin{ex}
\label{ex_probs_d=1}
Suppose $\la = (h^v)$ is a rectangle of width $h$ and height $v$. In this case, $\mc{D}(\la)$ consists of the partition $\mu = (h^{v-1},h-1)$, and $\mc{U}(\la)$ consists of the two partitions $\nu_1 = (h+1,h^{v-1})$ and $\nu_2 = (h^v,1)$. One computes
\[
\begin{array}{ll}
\mc{P}_\la(\la \rightarrow \nu_1) = \dfrac{1-t^v}{1-q^ht^v} & \quad\quad \mc{P}_\la(\mu \rightarrow \nu_1) = qt^{v-1}\dfrac{1-q^{h-1}t}{1-q^ht^v} \bigskip \\
\mc{P}_\la(\la \rightarrow \nu_2) = t^v\dfrac{1-q^h}{1-q^ht^v} & \quad\quad \mc{P}_\la(\mu \rightarrow \nu_2) = \dfrac{1-qt^{v-1}}{1-q^ht^v}.
\end{array}
\]
\end{ex}

\begin{thm}
\label{thm_prob_bij}
\
\begin{enumerate}
\item For each $\mu \in \mc{D}^*(\la)$ (resp., $\nu \in \mc{U}(\la)$), we have
\[
\sum_{\nu \in \mc{U}(\la)} \mc{P}_\la(\mu \rightarrow \nu) = 1 \qquad\quad \text{(resp., } \sum_{\mu \in \mc{D}^*(\la)} \ov{\mc{P}}_\la(\mu \leftarrow \nu) = 1\text{)}.
\]
\item For $\mu \in \mc{D}^*(\la)$ and $\nu \in \mc{U}(\la)$, we have
$
\omega_\la(\mu) \mc{P}_\la(\mu \rightarrow \nu) = \ov{\mc{P}}_\la(\mu \leftarrow \nu) \ov{\omega}_\la(\nu).
$
\end{enumerate}
Thus, the expressions $\mc{P}_\la$ and $\ov{\mc{P}}_\la$ define a probabilistic bijection from $(\mc{D}^*(\la), \omega_\la)$ to $(\mc{U}(\la), \ov{\omega}_\la)$.
\end{thm}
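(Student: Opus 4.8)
The plan is to verify the two parts separately, beginning with the detailed-balance condition (2), which is essentially formal, and then attacking the normalization condition (1), which carries the real content. For part (2) I would substitute the definitions from Definition \ref{def_probs} and use the identities $\omega_\la(\mu) = \alpha_{\la/\mu}/\ov{\alpha}_{\la/\mu}$ and $\ov{\omega}_\la(\nu) = \alpha_{\nu/\la}/\ov{\alpha}_{\nu/\la}$ of \eqref{eq_omega_alpha}. When $\mu = \la$ both sides of $\omega_\la(\mu)\mc{P}_\la(\mu \rightarrow \nu) = \ov{\mc{P}}_\la(\mu \leftarrow \nu)\ov{\omega}_\la(\nu)$ collapse to $t^{r_\nu - 1}\alpha_{\nu/\la}$ (using $\omega_\la(\la)=1$); when $\mu \in \mc{D}(\la)$ the common factor $t^{r_\nu - r_\mu - 1}\eta_{\nu/\la/\mu}$ pulls out and the remaining $\alpha$'s and $\ov{\alpha}$'s cancel against the weights, both sides reducing to $t^{r_\nu - r_\mu - 1}\alpha_{\nu/\la}\,\eta_{\nu/\la/\mu}/\ov{\alpha}_{\la/\mu}$. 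The point is that $\eta_{\nu/\la/\mu}$ occurs identically in $\mc{P}_\la$ and $\ov{\mc{P}}_\la$, so it never has to be expanded, and (2) follows with no genuine computation.

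The substance is part (1). The key preliminary step is to extract a closed product formula for $\alpha_{\nu/\la}$ (and likewise $\ov{\alpha}$) purely in terms of the coordinates of the corners of $\la$. Here $\mc{R}_{\nu/\la}$ and $\mc{C}_{\nu/\la}$ are exactly the arm and the leg of the new cell $\nu/\la$, and along each the factors $[h_\la(c)]^\ell/[h_\nu(c)]^\ell$ (resp.\ $[h_\la(c)]^a/[h_\nu(c)]^a$) telescope: moving along the row, the arm-length increments by one at each step while the leg-length is constant except where a row above $r_\nu$ terminates, i.e.\ at a corner of $\la$. Carrying out this telescoping, I expect $\alpha_{\nu/\la}$ to reduce to a ratio of factors of the form $1 - q^{c_\nu - c}t^{\,r - r_\nu}$ (up to a unit shift of exponents) indexed by the corners $(r,c)$ of $\la$ between $\nu$ and the first row and column. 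The checks $\la=(1)$ and the rectangular case of Example \ref{ex_probs_d=1} both confirm this shape, the denominators $1-q^h t^v$ appearing there being exactly $1-q^{c_\mu}t^{r_\mu}$ for the unique inner corner $\mu$.

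With these closed forms in hand, each normalization identity $\sum_{\nu \in \mc{U}(\la)}\mc{P}_\la(\mu \rightarrow \nu) = 1$ becomes an identity of rational functions in $q,t$ governed by the corner monomials $q^{c}t^{r}$. I would prove it either by induction on the number $d$ of corners of $\la$ --- peeling off the extreme corner matches a $(q,t)$-hook-walk recursion and reduces to a partition with fewer corners --- or, after introducing an auxiliary variable $z$ specializing to the corner monomials, by a partial-fraction expansion in $z$: the summands have only simple poles at corner coordinates, and since outer corners interlace inner corners along the boundary, the residues at each interior pole cancel between the two neighbouring terms, forcing the sum to be a constant which a limit estimate pins to $1$. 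The dual identity $\sum_{\mu \in \mc{D}^*(\la)}\ov{\mc{P}}_\la(\mu \leftarrow \nu)=1$ I would then deduce from the first via the involution $\la \mapsto \la'$ (conjugate partition) together with $q \leftrightarrow t$, which interchanges $[h]^\ell \leftrightarrow [h]^a$, rows with columns, and hence $\alpha \leftrightarrow \ov{\alpha}$; one need only track that the prefactors $t^{r_\nu - 1}$ and $t^{r_\nu - r_\mu - 1}$ transform consistently, correcting by the appropriate power of $q$.

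The main obstacle is the normalization in part (1): both making the telescoping formula for $\alpha_{\nu/\la}$ precise in French coordinates (bookkeeping exactly which corners contribute), and the final rational-function identity, where the residue cancellation and the behaviour at the ends of the summation range must be checked carefully. By contrast, the detailed-balance condition and the conjugation symmetry are routine once the closed forms and the first normalization are established.
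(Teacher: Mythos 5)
Your overall strategy coincides with the paper's: part (2) is a purely formal consequence of \eqref{eq_omega_alpha}, and your verification is correct (both sides reduce to $t^{r_\nu-1}\alpha_{\nu/\la}$ when $\mu=\la$ and to $t^{r_\nu-r_\mu-1}\alpha_{\nu/\la}\,\eta_{\nu/\la/\mu}/\ov{\alpha}_{\la/\mu}$ when $\mu\in\mc{D}(\la)$); part (1) is where the content lies, and the paper proves it exactly as you propose in outline, by telescoping $\alpha_{\kappa/\rho}$ and $\ov{\alpha}_{\kappa/\rho}$ into closed products indexed by the corner coordinates of $\la$ and then establishing a rational-function identity in those parameters via Lagrange interpolation, which is the same device as your partial-fraction expansion. (Your telescoping of $\alpha_{\nu_1/\la}$ in the rectangular case correctly reproduces Example \ref{ex_probs_d=1}.)

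There is, however, one step in your plan that fails: deducing the second normalization $\sum_{\mu\in\mc{D}^*(\la)}\ov{\mc{P}}_\la(\mu\leftarrow\nu)=1$ from the first via $\la\mapsto\la'$, $q\leftrightarrow t$. That involution does interchange $[h]^\ell\leftrightarrow[h]^a$, rows with columns, and hence $\alpha\leftrightarrow\ov{\alpha}$, so up to monomial prefactors it relates $\ov{\mc{P}}_\la$ to $\mc{P}_{\la'}$; but it sends $\mc{D}^*(\la)$ to $\mc{D}^*(\la')$ and $\mc{U}(\la)$ to $\mc{U}(\la')$, so a sum over down-neighbours remains a sum over down-neighbours. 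After conjugating you are therefore looking at $\sum_{\mu'\in\mc{D}^*(\la')}\mc{P}_{\la'}(\mu'\to\nu')$ with $\nu'$ fixed --- a \emph{column} sum of the forward matrix --- whereas the identity you have proved is the \emph{row} sum $\sum_{\nu'}\mc{P}_{\la'}(\mu'\to\nu')=1$. The two normalizations are genuinely independent: summing the detailed-balance relation of part (2) over $\mu$ shows only that the column normalization is equivalent to the refinement $\sum_\mu\omega_\la(\mu)\mc{P}_\la(\mu\to\nu)=\ov{\omega}_\la(\nu)$, which does not follow from the row normalization. The fix is to run your interpolation argument a second time for the sum over $\mc{D}^*(\la)$, where the $d+1$ outer-corner parameters and the $d$ inner-corner parameters (together with the special element $\la$) exchange roles; this is what the paper's Lagrange-interpolation proof handles. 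A smaller caution: your picture of ``residues cancelling between two neighbouring terms'' should be checked against the actual pole structure --- the second denominator factor of $\eta_{\nu/\la/\mu}$ sits at the shifted monomial $q^{c_\nu-1}t^{-(r_\nu-1)}$, which is not in general a corner monomial, and this bookkeeping is precisely where the paper's explicit reparametrization earns its keep.
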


The second part of this result follows immediately from \eqref{eq_omega_alpha}. The first part is proved by expressing the probabilities more explicitly in terms of a set of parameters associated to $\la$, and then using Lagrange interpolation (see \cite[\S 4.5]{AF_qrst} for details).

\begin{rem}
\label{rem_honest_probs}
The expressions $\mc{P}_\la(\mu \rightarrow \nu)$ and $\ov{\mc{P}}_\la(\mu \leftarrow \nu)$ take values in $[0,1]$ when $q,t \in [0,1)$ or $q,t \in (1,\infty)$. We believe this provides justification for calling these expressions probabilities.
\end{rem}

\subsection{Probabilistic insertion}

We now view the probabilities $\mc{P}_\la$ as a set of (probabilistic) local growth rules, and define the $\qrst$ algorithm analogously to the deterministic insertion algorithms $\RS_{F_\bullet}$ in \S \ref{sec_RS}.

\begin{defn}
Let $T$ be a partial standard Young tableau, and $k$ a number which is not an entry of $T$. The \emph{$(q,t)$-Robinson--Schensted insertion} of $k$ into $T$, denoted $T \xleftarrow{\qrst} k$,
is a probability distribution on partial SYTs, which is computed as follows:
\begin{itemize}
\item \textbf{Initial insertion step:} For each $\nu \in \U(T^{(k)})$, place $k$ in the cell $\nu/T^{(k)}$ with probability $\mc{P}_{T^{(k)}}(T^{(k)} \rightarrow \nu)$.

\item \textbf{Bumping step(s):} Suppose an entry $z$ of $T$ is displaced by a smaller number. For each $\nu \in \mc{U}(T^{(z)})$, place $z$ in the cell $\nu/T^{(z)}$ with probability $\mc{P}_{T^{(z)}}(T^{(z-1)} \rightarrow \nu)$.
\end{itemize}
In other words, the probability that $(T \xleftarrow{\qrst} k) = T'$ is the sum of the probabilities of all ``insertion paths'' that produce $T'$.

The \emph{$(q,t)$-Robinson--Schensted (\qrst) correspondence} associates to each $\sigma \in S_n$ a probability distribution $\mc{P}_{\qrst}(\sigma \rightarrow P,Q)$ on pairs of SYTs of the same shape, where $\mc{P}_{\qrst}(\sigma \rightarrow P,Q)$ is the sum of the probabilities of all ways of successively inserting $\sigma_1, \ldots, \sigma_n$ using the above procedure, starting with the empty tableau, such that the end result is $P$, and the growth at each step is recorded by $Q$.
\end{defn}

As in the case of the bijections $\RS_{F_\bullet}$, Fomin's growth diagrams give an elegant way of defining the probabilities $\mc{P}_{\qrst}$, as well as the ``backward'' or ``inverse'' probabilities $\ov{\mc{P}}_{\qrst}$. This perspective leads to a straightforward proof that $\mc{P}_{\qrst}, \ov{\mc{P}}_{\qrst}$ give a probabilistic bijection between the weighted sets of permutations and pairs of standard Young tableaux of the same shape (with weight functions $\frac{(1-t)^n}{(1-q)^n}$ and $\psi_P(q,t)\vp_Q(q,t)$, respectively). The growth diagram point of view also makes it clear that $\qrst$ enjoys the symmetry property
\[
\mc{P}_{\qrst}(\sigma \rightarrow P,Q) = \mc{P}_{\qrst}(\sigma^{-1} \rightarrow Q,P).
\]
The details appear in \cite[\S 4.6]{AF_qrst}.

\begin{ex}
\label{ex_qrst_insertion}
We compute the probability distribution $\mc{P}_{\qrst}(\sigma \rightarrow P,Q)$ for $\sigma = 312 \in S_3$. The insertion of 3 into the empty tableau produces $\young(3)$. When 1 is inserted into $\young(3)$, the 1 displaces the 3, and the bumping of the 3 produces
\[
\begin{array}{llc}
\young(13) & \text{ with probability } & \mc{P}_{(1)}(\emptyset \rightarrow (2)) = \frac{q(1-t)}{1-qt} \bigskip \\
\young(1,3) & \text{ with probability } & \mc{P}_{(1)}(\emptyset \rightarrow (1,1)) = \frac{1-q}{1-qt}.
\end{array}
\]
(The expressions for the probabilities come from Example \ref{ex_probs_d=1}.) When 2 is inserted into $\young(13)$, it either displaces the 3, or goes into the second row. In the former case, the 3 either remains in the first row, or moves to the second row. This results in
\[
\begin{array}{llc}
\young(123) & \text{ with probability } & \mc{P}_{(1)}((1) \rightarrow (2)) \mc{P}_{(2)}((1) \rightarrow (3)) = \frac{1-t}{1-qt} \frac{q(1-qt)}{1-q^2t} \bigskip \\
\young(12,3) & \text{ with probability } & \mc{P}_{(1)}((1) \rightarrow (2)) \mc{P}_{(2)}((1) \rightarrow (2,1)) = \frac{1-t}{1-qt} \frac{1-q}{1-q^2t} \bigskip \\
\young(13,2) & \text{ with probability } & \mc{P}_{(1)}((1) \rightarrow (1,1)) = \frac{t(1-q)}{1-qt}.
\end{array}
\]
The insertion of 2 into $\young(1,3)$ is computed similarly. The end result is the probability distribution shown below.
\[
\begin{array}{ccccccc}
P\quad & \young(123) & \young(12,3) & \young(13,2) & \young(12,3) & \young(13,2) & \young(1,2,3) \medskip \\
Q\quad & \young(123) & \young(12,3) & \young(12,3) & \young(13,2) & \young(13,2) & \young(1,2,3) \bigskip \\
\quad\quad\quad\quad & \frac{q^2(1-t)^2}{(1-qt)(1-q^2t)} & \frac{q(1-q)(1-t)^2}{(1-qt)^2(1-q^2t)} & \frac{qt(1-q)(1-t)}{(1-qt)^2} & \frac{(1-q)(1-t)}{(1-qt)^2} & \frac{qt^2(1-q)^2(1-t)}{(1-qt)^2(1-qt^2)} & \frac{t(1-q)^2}{(1-qt)(1-qt^2)}
\end{array}
\]
\end{ex}


\section{Specializations of $\qrst$}
\label{sec_properties}

\begin{figure}
\begin{center}
\begin{tikzpicture}

\draw (7,7) node{\textcolor{blue}{$\qrst$}};

\footnotesize
\draw (1.5,5.19) node{\begin{tabular}{c} \color{red}{$t$-RS} \\ \color{red}{(row insertion)} \end{tabular}};
\draw (4.5,5) node{\begin{tabular}{c} \color{cyan}{$q$-RS} \\ \color{cyan}{(row insertion)} \\ \cite{BorodinPetrov16, MatveevPetrov17} \end{tabular}};
\draw (9.5,5) node{\begin{tabular}{c} \color{cyan}{$q$-RS} \\ \color{cyan}{(column insertion)} \\ \cite{OConnellPei13, Pei14, MatveevPetrov17} \end{tabular}};
\draw (12.5,5) node{\begin{tabular}{c} \color{red}{$t$-RS} \\ \color{red}{(column insertion)} \\ \cite{BufetovPetrov15, BufetovMatveev18} \end{tabular}};

\draw[->] (7,6.7) --node[right]{$q = t$} (7,4);
\draw[->] (7.2,6.7) --node[right]{$\;\; t \rightarrow \infty, q \rightarrow q^{-1}$} (8.7,5.7);
\draw[->] (7.5,6.7) -- (11,6.7) --node[right]{$\;\; q \rightarrow \infty, t \rightarrow t^{-1}$} (12,5.7);
\draw[->] (6.8,6.7) --node[left]{$t \rightarrow 0 \;\;$} (5.3,5.7);
\draw[->] (6.5,6.7) -- (3,6.7) --node[left]{$q \rightarrow 0 \;\;$} (2,5.7);

\draw[->] (7,3) --node[right]{$q \rightarrow 1$} (7,2.1);
\draw[->] (6.8,3) --node[above]{$q \rightarrow 0 \;\;$} (4,2.1);
\draw[->] (7.2,3) --node[above]{$\quad\;\; q \rightarrow \infty$} (10,2.1);

\draw[->] (1.5,4.3) --node[left]{$t \rightarrow 0$} (3,2.1);
\draw[->] (4.5,4.3) --node[left]{$q \rightarrow 0$} (3.5,2.1);
\draw[->] (9.5,4.3) --node[right]{$q \rightarrow 0$} (10.5,2.1);
\draw[->] (12.5,4.3) --node[right]{$t \rightarrow 0$} (11,2.1);

\draw (3,1.8) node{\textcolor{purple}{RS (row insertion)}};
\draw (11,1.8) node{\textcolor{purple}{RS (column insertion)}};

\draw (7,3.5) node{\textcolor{purple}{\begin{tabular}{c} $q$-Plancherel measure \\ (for $\sigma = \id$) \end{tabular}}};
\draw (7,1.61) node{\textcolor{purple}{\begin{tabular}{c} Plancherel measure \\ (for all permutations $\sigma$) \end{tabular}}};
\normalsize

\end{tikzpicture}
\end{center}
\vspace{-2ex}
\caption{Specializations of $\qrst$. The color indicates the corresponding specialization of the Macdonald functions: $q$-Whittaker functions for the $q$-RS insertions; Hall--Littlewood functions for the $t$-RS insertions; Schur functions for the others.}
\label{fig_spec_chart}
\end{figure}
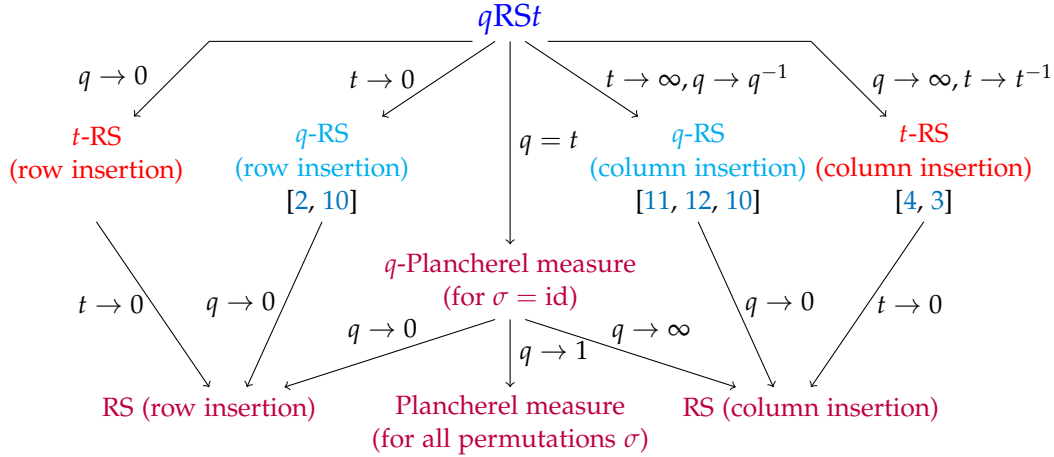

Figure \ref{fig_spec_chart} summarizes a number of specializations of $\qrst$. The reader may investigate these in Example \ref{ex_qrst_insertion}. Here we will focus on the $q = t$ specialization. This specialization gives rise to a one-parameter family of probabilistic bijections between the (trivially) weighted sets $(S_n, 1)$ and $(\bigsqcup_{\la \vdash n} \SYT(\la) \times \SYT(\la), 1)$, which contains both the row and column insertion versions of RS ($q = t = 0$ and $q = t \rightarrow \infty$, respectively). By Remark \ref{rem_honest_probs}, each nonnegative value of the parameter gives rise to a probabilistic bijection consisting of actual probabilities. The value $q = t \rightarrow 1$ is particularly interesting.

\begin{prop}
\label{prop_q=t=1}
Let $\tw{\mc{P}}_\la$ denote the $q = t \rightarrow 1$ specialization of $\mc{P}_\la$ (and of $\ov{\mc{P}}_\la$). We have
\[
\tw{\mc{P}}_\la(\la \rightarrow \nu) = \dfrac{H_\la}{H_\nu}, \qquad\qquad \tw{\mc{P}}_\la(\mu \rightarrow \nu) = \dfrac{H_\la^2}{H_\mu H_\nu} \dfrac{1}{h_\la(c_{\mu,\nu})^2}
\]
for $\mu \in \mc{D}(\la)$ and $\nu \in \mc{U}(\la)$. Here $H_\kappa = \prod_{c \in \kappa} h_\kappa(c)$ is the product of the hook-lengths of $\kappa$, and $c_{\mu,\nu}$ is the unique cell in $\la$ which is in $\mc{R}_{\nu/\la} \cap \mc{C}_{\la/\mu}$ or $\mc{C}_{\nu/\la} \cap \mc{R}_{\la/\mu}$.
\end{prop}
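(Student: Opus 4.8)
The plan is to specialize in two stages, first setting $q=t$ and then letting $t \to 1$. When $q=t$, the two $(q,t)$-hook quantities coincide: since $a_\kappa(c)+\ell_\kappa(c)+1 = h_\kappa(c)$, we get $[h_\kappa(c)]^\ell = [h_\kappa(c)]^a = 1-t^{h_\kappa(c)}$. Hence for $\rho \lessdot \kappa$ the two products defining $\alpha_{\kappa/\rho}$ merge (the sets $\mc{R}_{\kappa/\rho}$ and $\mc{C}_{\kappa/\rho}$ are disjoint, since a cell of $\rho$ cannot lie in both the row and column of the single added cell), giving $\alpha_{\kappa/\rho} = \prod_{c \in \mc{R}_{\kappa/\rho} \cup \mc{C}_{\kappa/\rho}} \tfrac{1-t^{h_\rho(c)}}{1-t^{h_\kappa(c)}}$. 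In $\eta_{\nu/\la/\mu}$, both exponents in the denominator become $(c_\mu - c_\nu)+(r_\nu-r_\mu) =: D$, so at $q=t$ we have $\eta_{\nu/\la/\mu} = \tfrac{(1-t)^2}{(1-t^D)^2}$.

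Next I let $t \to 1$. Each factor $\tfrac{1-t^{h_\rho(c)}}{1-t^{h_\kappa(c)}} \to \tfrac{h_\rho(c)}{h_\kappa(c)}$ and $\tfrac{1-t}{1-t^D} \to \tfrac{1}{D}$ (finite products and ratios, by L'H\^opital), while the prefactors $t^{r_\nu-1}$ and $t^{r_\nu-r_\mu-1}$ tend to $1$. Writing $\wh{\alpha}_{\kappa/\rho} = \prod_{c \in \mc{R}_{\kappa/\rho}\cup \mc{C}_{\kappa/\rho}} \tfrac{h_\rho(c)}{h_\kappa(c)}$ for the limit, I obtain $\tw{\mc{P}}_\la(\la \to \nu) = \wh{\alpha}_{\nu/\la}$ and $\tw{\mc{P}}_\la(\mu \to \nu) = \tfrac{\wh{\alpha}_{\nu/\la}}{\wh{\alpha}_{\la/\mu}}\cdot \tfrac{1}{D^2}$.

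The elementary input is the effect of adding an outer corner on hook-lengths: if $\kappa = \rho \cup \{c_0\}$ with $c_0$ the new cell, then $h_\kappa(c) = h_\rho(c)+1$ for $c \in \mc{R}_{\kappa/\rho}\cup\mc{C}_{\kappa/\rho}$, $h_\kappa(c) = h_\rho(c)$ for all other cells of $\rho$, and $h_\kappa(c_0)=1$. Dividing $H_\kappa = \prod_{c} h_\kappa(c)$ by $H_\rho$ and cancelling the unchanged cells gives $\tfrac{H_\kappa}{H_\rho} = \prod_{c \in \mc{R}_{\kappa/\rho}\cup\mc{C}_{\kappa/\rho}} \tfrac{h_\kappa(c)}{h_\rho(c)} = 1/\wh{\alpha}_{\kappa/\rho}$. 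Applying this with $(\kappa,\rho)=(\nu,\la)$ and $(\kappa,\rho)=(\la,\mu)$ yields $\wh{\alpha}_{\nu/\la} = H_\la/H_\nu$ and $\wh{\alpha}_{\la/\mu} = H_\mu/H_\la$, so $\tw{\mc{P}}_\la(\la \to \nu) = H_\la/H_\nu$ and $\tfrac{\wh{\alpha}_{\nu/\la}}{\wh{\alpha}_{\la/\mu}} = \tfrac{H_\la^2}{H_\mu H_\nu}$, matching both stated formulas up to the factor $1/D^2$.

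What remains, and where I expect the main difficulty, is purely a coordinate bookkeeping argument in French notation showing $D^2 = h_\la(c_{\mu,\nu})^2$. I would first record that an inner corner $(r_\mu,c_\mu)$ and an outer corner $(r_\nu,c_\nu)$ of the same $\la$ always lie in distinct rows and distinct columns and are incomparable in the coordinatewise order (this follows from the staircase structure of the boundary and simultaneously guarantees the existence and uniqueness of $c_{\mu,\nu}$). Thus exactly one of two cases holds. If $r_\mu > r_\nu$ and $c_\mu < c_\nu$, then $c_{\mu,\nu} = (r_\nu, c_\mu)$; using that row $r_\nu$ of $\la$ ends at column $c_\nu-1$ and column $c_\mu$ of $\la$ ends at row $r_\mu$, one finds $a_\la(c_{\mu,\nu}) = c_\nu-c_\mu-1$, $\ell_\la(c_{\mu,\nu}) = r_\mu-r_\nu$, whence $h_\la(c_{\mu,\nu}) = (c_\nu-c_\mu)+(r_\mu-r_\nu) = -D$. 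If instead $r_\mu < r_\nu$ and $c_\mu > c_\nu$, then $c_{\mu,\nu} = (r_\mu, c_\nu)$ and the symmetric computation gives $h_\la(c_{\mu,\nu}) = (c_\mu-c_\nu)+(r_\nu-r_\mu) = D$. Either way $h_\la(c_{\mu,\nu})^2 = D^2$, which completes the proof.
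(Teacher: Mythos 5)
Your overall route is the same as the paper's (which records only the two limits $\alpha_{\kappa/\rho}\to H_\rho/H_\kappa$ and $\eta_{\nu/\la/\mu}\to 1/h_\la(c_{\mu,\nu})^2$ and leaves the verification to the reader), and the bulk of your verification is correct: the merging of $[h]^\ell$ and $[h]^a$ at $q=t$, the disjointness of $\mc{R}_{\kappa/\rho}$ and $\mc{C}_{\kappa/\rho}$ for a single added cell, the hook-change lemma giving $\alpha_{\kappa/\rho}\to H_\rho/H_\kappa$, and the reduction of everything to $D^2 = h_\la(c_{\mu,\nu})^2$ are all fine.

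However, the final bookkeeping step contains a false assertion. An inner corner and an outer corner of the same $\la$ \emph{can} share a row or a column: for $\la=(2,1)$ the inner corner $(1,2)$ and the outer corner $(1,3)$ lie in the same row, and the inner corner $(2,1)$ and the outer corner $(2,2)$ likewise. So your dichotomy is not exhaustive; besides the two ``incomparable'' cases there are two degenerate ones, namely $(r_\nu,c_\nu)=(r_\mu,c_\mu+1)$ (the new cell sits immediately to the right of the removed cell) and $(r_\nu,c_\nu)=(r_\mu+1,c_\mu)$ (immediately above it). These are exactly the configurations where $\nu\supset\mu$ with $\nu/\mu$ a horizontal or vertical domino. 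In both cases $D=\mp 1$, and the relevant cell is the inner corner $\la/\mu$ itself, whose hook-length in $\la$ is $1$, so the identity $D^2=h_\la(c_{\mu,\nu})^2$ does persist — but your argument as written does not cover these cases, and your claim that the dichotomy ``guarantees the existence and uniqueness of $c_{\mu,\nu}$'' is doing real work precisely where it fails. (A further wrinkle: with the paper's convention that $\mc{C}_{\la/\mu}$ consists of cells of $\mu$, the cell $\la/\mu$ is excluded from $\mc{C}_{\la/\mu}$ and from $\mc{R}_{\la/\mu}$, so in the degenerate cases the set $\bigl(\mc{R}_{\nu/\la}\cap\mc{C}_{\la/\mu}\bigr)\cup\bigl(\mc{C}_{\nu/\la}\cap\mc{R}_{\la/\mu}\bigr)$ is literally empty; one must read $c_{\mu,\nu}$ with Macdonald's convention, which includes the skew cells, for the statement to parse there.) The fix is short — add the two degenerate cases and check $h_\la(\la/\mu)=1$ — but as it stands the proof both asserts something false and omits the cases where that assertion is used.
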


\begin{proof}
In the limit $q = t \rightarrow 1$, we have $\alpha_{\kappa/\rho} \rightarrow H_\rho/H_\kappa$ and $\eta_{\nu/\la/\mu} \rightarrow 1/h_\la(c_{\mu,\nu})^2$.
\end{proof}

By substituting this result into Theorem \ref{thm_prob_bij}(1) and using the hook-length formula $f_\la = n!/H_\la$, we obtain the following identities for $\la \vdash n$:
\begin{align}
\label{eq_upper} \sum_{\nu \in \mc{U}(\la)} f_\nu &= (n+1) f_\la \\
\label{eq_mu} \sum_{\nu \in \mc{U}(\la)} \dfrac{f_\mu f_\nu}{(h_\la(c_{\mu,\nu}))^2} &= \dfrac{n+1}{n} (f_\la)^2 \quad\quad \text{ for } \mu \in \mc{D}(\la) \\
\label{eq_nu} \dfrac{f_\la f_\nu}{n} + \sum_{\mu \in \mc{D}(\la)} \dfrac{f_\mu f_\nu}{(h_\la(c_{\mu,\nu}))^2} &= \dfrac{n+1}{n} (f_\la)^2 \quad\quad \text{ for } \nu \in \mc{U}(\la).
\end{align}

The identity \eqref{eq_upper} is a classical result known as the ``upper recursion'' for the numbers $f_\la$. Greene, Nijenhuis, and Wilf showed that the ratios $H_\la/H_\nu$ arise from a ``random hook walk'' taking place outside the Young diagram of $\la$, thereby giving a beautiful explanation for why the ratios $H_\la/H_\nu$ define a probability distribution on $\mc{U}(\la)$ \cite{GNW2}. We show in \cite[\S 6]{AF_qrst} that the probabilities $\mc{P}_\la(\la \rightarrow \nu)$ arise from a $(q,t)$-generalization of this random hook walk (this was inspired by the $(q,t)$-hook walk in \cite{GarHai}). The identities \eqref{eq_mu} and \eqref{eq_nu}, on the other hand, seem to be new, and we believe they deserve further study.

We end with a result whose proof will appear in forthcoming work of the authors (the $\sigma = \id$ case follows easily from Proposition \ref{prop_q=t=1}).

\begin{prop}
Suppose $\sigma \in S_n$ and $P \in \SYT(\la)$ for some $\la \vdash n$. In the $q = t \rightarrow 1$ specialization of $\qrst$, the probability that $P$ is the insertion tableau of $\sigma$ is equal to $f_\la/n!$.
\end{prop}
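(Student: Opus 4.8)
The plan is to reduce the statement about insertion tableaux to one about recording tableaux using the symmetry property $\mc{P}_{\qrst}(\sigma \to P,Q) = \mc{P}_{\qrst}(\sigma^{-1}\to Q,P)$, and then to control the recording tableau through the random chain of shapes. Writing $\tw{\mc{P}}_{\qrst}$ for the $q=t\to1$ specialization of $\mc{P}_{\qrst}$ and summing the symmetry relation over $Q$ gives
\[
\sum_{Q}\tw{\mc{P}}_{\qrst}(\sigma\to P,Q) = \sum_Q \tw{\mc{P}}_{\qrst}(\sigma^{-1}\to Q,P),
\]
that is, the probability that $P$ is the insertion tableau of $\sigma$ equals the probability that $P$ is the recording tableau of $\sigma^{-1}$. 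Since $\sigma \mapsto \sigma^{-1}$ is a bijection of $S_n$, it suffices to prove that for every $\sigma \in S_n$ and every $Q \in \SYT(\la)$, the probability that $Q$ is the recording tableau of $\sigma$ equals $f_\la/n!$.

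The recording tableau is exactly the chain of shapes $\emptyset = \la^0 \lessdot \la^1 \lessdot \cdots \lessdot \la^n = \la$ of the partial insertion tableaux $P_0, P_1, \ldots, P_n$. The heart of the argument is a single-insertion lemma: for any partial SYT $T$ of shape $\kappa$ and any letter $k$ not in $T$, the $q=t\to1$ insertion $T \xleftarrow{\qrst} k$ produces a tableau of shape $\nu$ with probability $H_\kappa/H_\nu$ for each $\nu \in \mc{U}(\kappa)$, \emph{independent of both $k$ and the internal filling of $T$}. Granting this, the shape sequence is a Markov chain with transition probabilities $\kappa \to \nu$ equal to $H_\kappa/H_\nu$ (these coincide with the Plancherel growth transitions $f_\nu/((|\kappa|+1)f_\kappa)$ by the hook-length formula and the upper recursion \eqref{eq_upper}), regardless of $\sigma$. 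The probability of a fixed chain then telescopes:
\[
\prod_{i=1}^n \frac{H_{\la^{i-1}}}{H_{\la^i}} = \frac{H_{\la^0}}{H_{\la^n}} = \frac{1}{H_\la} = \frac{f_\la}{n!},
\]
using $H_\emptyset = 1$ and $f_\la = n!/H_\la$. As this value depends only on the shape $\la$ of $Q$, the reduction above completes the proof.

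To prove the single-insertion lemma, I would track the bumping path of the insertion of $k$. Only the terminal placement (into an empty cell) changes the shape of $T$, so the increment $\nu/\kappa$ is the cell added at the last step, while all earlier placements merely permute entries among the cells already present in $\kappa$. Using the limits $\alpha_{\kappa/\rho}\to H_\rho/H_\kappa$ and $\eta_{\nu/\la/\mu}\to 1/h_\la(c_{\mu,\nu})^2$ recorded in the proof of Proposition \ref{prop_q=t=1}, together with the explicit forms of $\tw{\mc{P}}_\la(\la\to\nu)$ and $\tw{\mc{P}}_\la(\mu\to\nu)$ given there, one sums the probabilities of all bumping paths ending at a fixed terminal cell and checks that the resulting hook-length factors telescope to $H_\kappa/H_\nu$. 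The conceptual reason to expect this is that the sequence of placements realizes a random hook walk: the authors already identify $\tw{\mc{P}}_\la(\la\to\nu) = H_\la/H_\nu$ with the terminal distribution of the Greene--Nijenhuis--Wilf hook walk outside $\la$, and the full insertion should be re-expressible as such a walk whose endpoint distribution is insensitive to where the walk begins.

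I expect the main obstacle to be precisely this insensitivity: proving that the shape-increment distribution of a single insertion depends only on $\kappa$ and not on $k$ or on the detailed entries of $T$. The per-step probabilities $\tw{\mc{P}}_{T^{(z)}}(T^{(z-1)}\to\cdot)$ genuinely depend on the shapes of the subtableaux $T^{(z)}$, so the required collapse is a nontrivial cancellation special to the $q=t\to1$ limit. Making either the telescoping computation or the hook-walk reformulation rigorous, and handling the bookkeeping of which terminal cells are reachable along which bumping paths, is where the real work lies.
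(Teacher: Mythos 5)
First, a point of comparison: the paper does not actually prove this proposition --- it is stated with the remark that the proof will appear in forthcoming work, and only the $\sigma = \id$ case is said to follow easily from Proposition \ref{prop_q=t=1}. So there is no in-paper argument to measure yours against. That said, your skeleton is sound and is very plausibly the intended one. The symmetry reduction is legitimate (the paper asserts $\mc{P}_{\qrst}(\sigma \to P,Q) = \mc{P}_{\qrst}(\sigma^{-1} \to Q,P)$), the recording tableau is by definition the chain of shapes, and \emph{granting} your single-insertion lemma, the chain rule plus the telescoping product $\prod_i H_{\la^{i-1}}/H_{\la^i} = 1/H_\la = f_\la/n!$ correctly finishes the argument for every $\sigma$. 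Indeed, the $\sigma = \id$ case the authors call easy is exactly the degenerate instance of your argument in which no bumping ever occurs, so every step is literally $\tw{\mc{P}}_{\la^{i-1}}(\la^{i-1} \to \la^i) = H_{\la^{i-1}}/H_{\la^i}$; your lemma is precisely the generalization needed to handle arbitrary $\sigma$.

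The genuine gap is that this lemma --- that a single $q=t\to 1$ insertion of $k$ into a partial SYT $T$ of shape $\kappa$ adds the cell $\nu/\kappa$ with probability $H_\kappa/H_\nu$, independently of $k$ and of the filling of $T$ --- is where all of the content lives, and you only sketch a strategy for it. You correctly identify the difficulty, but it is worth noting concretely why no cheap induction along the bumping path works: the intermediate distributions are \emph{not} of the form $H_{\kappa_j}/H_\nu$. For instance, inserting $2$ into the tableau with entries $1,3$ in its first row (shape $(2)$, as in Example \ref{ex_qrst_insertion}), the process can terminate at shape $(2,1)$ either immediately (the $2$ drops to the second row, probability $\tfrac12$) or after bumping the $3$ (probability $\tfrac12 \cdot \tfrac13 = \tfrac16$); only the aggregate $\tfrac12 + \tfrac16 = \tfrac23 = H_{(2)}/H_{(2,1)}$ has the claimed form. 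So the lemma is a statement about a sum over all bumping paths terminating at a given outer corner of $\kappa$, and establishing the required cancellation (presumably via the hook-length identities \eqref{eq_mu}--\eqref{eq_nu} or a hook-walk coupling in the spirit of Greene--Nijenhuis--Wilf) is the entire remaining proof, not a routine verification. As submitted, the argument is a correct and well-motivated reduction of the proposition to an unproven (though true, and numerically checkable) key lemma.
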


\acknowledgements{This project grew out of a working group at LaCIM during 2019-2020. We are grateful to all the members of the group, and especially to Hugh Thomas, Fran\c cois Bergeron, and Steven Karp, for many interesting discussions.}

\bibliographystyle{plain}
\bibliography{Aigner_Frieden_qRSt}

\end{document}